\newdimen\AAdi%
\newbox\AAbo%
\def\AAk#1#2{\s_etbox\AAbo=\hbox{#2}\AAdi=\wd\AAbo\kern#1\AAdi{}}%
\def\AAr#1#2#3{\s_etbox\AAbo=\hbox{#2}\AAdi=\ht\AAbo\raise#1\AAdi\hbox{#3}}%
\font\tenmsb=msbm10 at 12pt \font\sevenmsb=msbm7 at 8pt
\font\fivemsb=msbm5 at 6pt
\def\Bbb#1{{\tenmsb\fam\msbfam#1}}
\newtheorem{theorem}{Theorem}
\newtheorem{remark}[theorem]{Remark}
\newtheorem{lemma}[theorem]{Lemma}
\numberwithin{equation}{section} \numberwithin{theorem}{section}
\renewcommand{\topmargin}{0cm}
\renewcommand{\oddsidemargin}{5mm}
\renewcommand{\evensidemargin}{5mm}
\renewcommand{\textwidth}{150mm}
\renewcommand{\textheight}{230mm}
\def\R{\mathbb R}
\def\S{\mathbb S}
\def\n{\mathbf n}
\def\sr#1{\mathscr{#1}}
\def\ir#1{\mathbb R^{#1}}
\def\cc#1{\Bbb C^{#1}}
\def\f#1#2{\frac{#1}{#2}}
\def\grs#1#2{\bold G_{#1,#2}}
\def\dt#1{\frac {d\,#1}{d\,t}}
\def\a{\alpha}
\def\be{\beta}
\def\r{\Re_{I\!V}}
\def\p#1{\partial #1}
\def\de{\delta}
\def\De{\Delta}
\def\e{\eta}
\def\G{\Gamma}
\def\g{\gamma}
\def\k{\kappa}
\def\n{\nabla}
\def\la{\lambda}
\def\La{\Lambda}
\def\lan{\langle}
\def\ran{\rangle}
\def\Om{\Omega}
\def\th{\theta}
\def\r{\rho}
\begin{document}

\title
{The Rigidity Theorems for Lagrangian Self shrinkers}

\author{Qi Ding}\author{Y. L. Xin}
\address{Institute of Mathematics, Fudan University,
Shanghai 200433, China} \email{09110180013@fudan.edu.cn}
\email{ylxin@fudan.edu.cn}
\thanks{The research was partially supported by NSFC}
\begin{abstract}
By the integral method we prove that  any space-like entire graphic
self-shrinking solution to Lagrangian mean curvature flow in
$\R^{2n}_{n}$ with the indefinite metric $\sum_i dx_idy_i$ is flat.
This result improves the previous ones in \cite{HW} and \cite{CCY}
by removing the additional assumption in their results. In a similar
manner, we reprove its Euclidean counterpart which is established in
\cite{CCY}.
\end{abstract}

%\keywords{Asymptotic behavior, inverse mean curvature flow,
%hyperbolic space} %\subjclass  {35K45; 35K65 }
\maketitle

\section{Introduction}
Let $M$ be a submanifold in $\ir{m+n}.$ Mean curvature flow is a one-parameter family $X_t=X(\cdot,
t)$ of immersions $X_t:M\to \ir{m+n}$ with corresponding images
$M_t=X_t(M)$ such that
\begin{equation*}\left\{\begin{split}
\dt{}X(x, t)&=H(x, t),\qquad x\in M\\
X(x, 0)&=X(x)
\end{split}\right.
\end{equation*}
is satisfied, where $H(x, t)$ is the mean curvature vector of
$M_t$ at $X(x, t)$ in $\ir{m+n}.$

An important class of solutions to the above mean curvature flow
equations are self-similar shrinkers, whose profiles,
self-shrinkers, satisfy a system of quasi-linear elliptic PDE of the
second order
\begin{equation}\label{SS}
H = -\frac{X^N}{2},
\end{equation}
where $(\cdots)^N$ stands for the orthogonal projection into the normal bundle $NM$.

In the ambient pseudo-Euclidean space we can also study the mean
curvature flow (see \cite{E1} \cite{E2}  \cite{E3}  \cite{X1} and
\cite{H}, for example). And self-shrinking graphs with high
codimensions in pseudo-Euclidean space has been studied in
\cite{DW}. Let $\R^{2n}_n$ be Euclidean space with null coordinates
$(x, y) = (x_1, \cdots, x_n; \, y_1, \cdots, y_n)$, which means that
the indefinite metric is defined by $ds^2=\sum_i dx_i dy_i.$ If
$M=\{(x,Du(x))\big|\ x\in\R^n\}$ is a space-like submanifold in
$\R^{2n}_n$, then $u$ is convex (In this paper, we say that a smooth function $f$ is \emph{convex}, if $D^2f>0$, i.e., hessian of $f$ is
positive definite in $\ir{n}$). The underlying Euclidean space
$\ir{2n}=\cc{n}$ of $\ir{2n}_n$ has the usual complex structure. It
is easily seen that $M$ is a Lagrangian submanifold in $\ir{2n}$
(\cite{X},\  Lemma 5.2.11), as well as in $\ir{2n}_n$ . Moreover, if
$M$ is also a self-shrinker, namely, the convex function $u$
satisfies \eqref{SS}. It has been shown that up to an additive
constant $u$ satisfies the elliptic equation (see
\cite{CCY}\cite{H}\cite{HW})
\begin{equation}\aligned\label{lss}
\log \det D^2u(x)=\f12x\cdot Du(x)-u(x).
\endaligned
\end{equation}

Huang-Wang \cite{HW} and Chau-Chen-Yuan \cite{CCY} have investigated
the entire solutions to the above equation and showed that  an
entire smooth convex solution to \eqref{lss} in $\R^n$ is the
quadratic polynomial under the decay condition on Hessian of $u$.

In \cite{CCY}, Chau-Chen-Yuan introduce a natural geometric quantity
$\phi=\log\det D^2u$ which obeys a second order elliptic equation
with an ''amplifying force''.  Based on it, we consider an important
operator: the drift Laplacian operator $\mathcal{L}$, which was
introduced by Colding-Minicozzi \cite{CM1}, and we can also write
the second order equation for $\phi$ in \cite{CCY} as
$\mathcal{L}\phi=0$. This enables us to apply integral method to
prove any entire smooth proper convex solution to \eqref{lss} in
$\R^n$ is the quadratic polynomial,  Theorem \ref{bnst}, where the
case $n=1$ is simple.

It is worth to note that when $\phi$ is constant the mean curvature
of $M$ vanishes (see (8.5.7) of Chap. VIII in \cite{X}), namely, the
gradient graph of a solution $u$ to \eqref{lss} defines a space-like
minimal Lagrangian submanifold in $\ir{2n}_n$.

By thoroughly analysing the convexity of $u$, we could prove that
any solution of \eqref{lss} is proper, which is showed in Theorem
\ref{lim}. Thus, we remove the additional condition of the
corresponding results in \cite{HW} and \cite{CCY}. Precisely, we
obtain

\begin{theorem}\label{main}
Any entire smooth convex solution $u(x)$ to \eqref{lss} in $\ir{n}$
is the quadratic polynomial $u(0)+\f12\lan D^2u(0)x, x\ran$.
\end{theorem}

We also consider the corresponding problem in ambient Euclidean
space: a Lagrangian graph $\{(x,Du(x))\big|\ x\in\R^n\}$ in
$\R^{2n}$ satisfying \eqref{SS}. Now,  $u$ is an entire solution to
the following equation:
\begin{equation}\aligned\label{lls}
\arctan\la_1(x)+\cdots+\arctan\la_n(x)=\f12x\cdot Du(x)-u(x)
\endaligned
\end{equation}
where $\la_1(x),\cdots,\la_n(x)$ are the eigenvalues of the Hessian $D^2u$ of $u$ at $x\in\R^n$.
Chau-Chen-Yuan \cite{CCY} constructed a barrier function to show that the phase function
$$\Theta=\arctan\la_1(x)+\cdots+\arctan\la_n(x)$$
on this Lagrangian graph is a constant via the maximum principles. A
geometric meaning of the phase function is the summation of the all
Jordan angles of the Gauss map $\g: M\to \grs{n}{n}$ (see \cite{X}
Chap 7,  for example). They proved the following theorem.
\begin{theorem}\label{re}
If $u(x)$ is an entire smooth solution to \eqref{lls} in $\R^n$, then $u(x)$ is
the quadratic polynomial $u(0)+\f12\lan D^2u(0)x, x\ran$.
\end{theorem}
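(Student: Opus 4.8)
The plan is to mimic, in the Euclidean setting, exactly the integral method that yields Theorem \ref{main}, replacing the role of $\phi=\log\det D^2u$ by the phase function $\Theta=\sum_i\arctan\lambda_i$. First I would observe that the self-shrinker equation \eqref{lls} says precisely $\Theta(x)=\tfrac12 x\cdot Du(x)-u(x)$, and that the induced metric on the Lagrangian graph $M=\{(x,Du(x))\}$ is $g_{ij}=\delta_{ij}+\sum_k u_{ik}u_{jk}$, i.e. $g=I+(D^2u)^2$. As in \cite{CCY}, a direct computation on $M$ shows that $\Theta$ satisfies the drift-Laplace equation $\mathcal{L}\Theta=0$, where $\mathcal{L}=\Delta_M-\tfrac12\langle X,\nabla_M(\cdot)\rangle$ is the operator of Colding–Minicozzi; the key point is that the mean curvature vector of a Lagrangian graph satisfies $H=J\nabla_M\Theta$, so the self-shrinker condition $H=-\tfrac12 X^N$ translates into a first-order relation, and differentiating once more gives $\mathcal{L}\Theta=0$. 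This is the exact analogue of writing the Chau–Chen–Yuan equation for $\phi$ as $\mathcal{L}\phi=0$.

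Next I would run the integral/properness argument. The crucial input, parallel to Theorem \ref{lim} for the indefinite case, is that the graph $M$ is \emph{proper} in $\R^{2n}$ — here this is actually easy, since $M$ is an entire graph over $\R^n$ and is automatically properly embedded (the projection to the first factor is proper), so there is no convexity subtlety to resolve. Then, using the weighted measure $e^{-|X|^2/4}\,d\mu$ on $M$ with respect to which $\mathcal{L}$ is self-adjoint, I would integrate $\Theta\,\mathcal{L}\Theta=0$ by parts. Properness plus the Gaussian weight guarantees the boundary terms vanish and all integrals converge, yielding $\int_M|\nabla_M\Theta|^2 e^{-|X|^2/4}\,d\mu=0$, hence $\nabla_M\Theta\equiv 0$ and $\Theta$ is constant. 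Equivalently $H\equiv 0$, so $M$ is a space-like (here: Euclidean) minimal Lagrangian submanifold; combined with $H=-\tfrac12 X^N$ this forces $X^N\equiv 0$, i.e. the position vector is everywhere tangent to $M$, which for an entire graph means $M$ is a linear subspace, so $D^2u$ is constant and $u$ is the quadratic polynomial $u(0)+\tfrac12\langle D^2u(0)x,x\rangle$.

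The main obstacle I anticipate is the same one that makes the integral method delicate in general: justifying the integration by parts, i.e. controlling the decay of $|\nabla_M\Theta|$ and of the volume growth of $M$ against the Gaussian weight $e^{-|X|^2/4}$ so that the cutoff argument goes through and the boundary term over $\partial B_R\cap M$ tends to $0$ as $R\to\infty$. Since $\Theta$ is bounded (it lies in $(-n\pi/2,n\pi/2)$), the only real work is a standard logarithmic cutoff estimate combined with the polynomial volume growth of self-shrinkers (Ding–Xin type bounds, or directly from properness of the graph); this is routine once properness is in hand, which is why the Euclidean case is genuinely softer than the indefinite-metric Theorem \ref{main}. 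A secondary point to check carefully is the final step $X^N\equiv 0\ \Rightarrow\ M$ linear: from $X^N=0$ one gets that each coordinate function of $X$ restricted to $M$ is $\mathcal{L}$-harmonic in the appropriate sense and that $|X|^2$ has vanishing normal Hessian contribution, forcing the second fundamental form to vanish; alternatively one notes $\Theta=\tfrac12 x\cdot Du-u$ constant together with differentiating yields $(D^2u)\,x$ proportional to $Du$-type identities that pin down $D^2u$ as constant.
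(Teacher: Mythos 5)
Your proposal follows essentially the same route as the paper: identify the drift Laplacian $\mathcal{L}$ with the Gaussian weight $e^{-(|x|^2+|Du|^2)/4}$, show $\mathcal{L}\Theta=0$, integrate $\eta^2\Theta\,\mathcal{L}\Theta$ by parts against that weight, and use the boundedness of $\Theta$ together with Euclidean volume growth of proper self-shrinkers (Ding--Xin \cite{DX}) to absorb the cutoff term and conclude $\Theta$ is constant, after which the CCY argument (equivalently your homogeneity-of-$Du$/cone observation) gives that $u$ is quadratic. The only cosmetic differences are that you invoke $H=J\nabla_M\Theta$ to motivate $\mathcal{L}\Theta=0$ where the paper does a direct coordinate computation, and you suggest a logarithmic cutoff where a linear one already suffices under polynomial volume growth; one phrase to tighten is that properness alone does not yield the needed volume growth --- you genuinely need the self-shrinker structure as in \cite{DX}, which you do cite.
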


We could also derive the phase function satisfies:
$\mathcal{L}\Theta=0$. This enables us to use the integral method to
reprove the above rigidity theorem.

{\bf Acknowledgement} The authors would like to express their
sincere thanks to Jingyi Chen for his valuable comments on the first
draft of this paper.

\section{Space-like Lagrangian self-shrinkers in pseudo-Euclidean space}

Let $M=\{(x,Du(x))\big|\ x\in\R^n\}$ be a space-like submanifold satisfying \eqref{lss} in ambient space $\R^{2n}_n$ with the induced metric $g_{ij}dx_idx_j$, where $Du=(u_1,u_2,\cdots,u_n)$.
Then $g_{ij}=\p_i\p_ju=u_{ij}$, and let $(g^{ij})$ denote the inverse matrix $(g_{ij})$. We write $g=\det{g_{ij}}$ for simplicity and $\xi\cdot\e=\lan\xi,\e\ran$ for any vectors $\xi,\e\in\R^n$. By \eqref{lss}, we have
\begin{equation}\aligned\label{p1}
\p_j(\log g)=\f12u_j+\f12x_iu_{ij}-u_j=\f12x_iu_{ij}-\f12u_j,
\endaligned
\end{equation}
and
\begin{equation}\aligned\label{p2}
\p_i(\sqrt{g}g^{ij})=&\f12\sqrt{g}g^{kl}\p_ig_{kl}g^{ij}-\sqrt{g}g^{ki}\p_ig_{kl}g^{lj}\\
=&-\f12\sqrt{g}g^{kl}u_{kli}g^{ij}=-\f12\sqrt{g}g^{ij}\p_i(\log g).
\endaligned
\end{equation}
Let $\mathcal{L}$ be a differential operator defined by
$$\mathcal{L}\phi=\f1{\sqrt{g}}e^{\f14x\cdot Du}\f{\p}{\p x_i}\left(g^{ij}\sqrt{g}e^{-\f14x\cdot Du}\f{\p}{\p x_j}\phi\right),$$
for any function $\phi\in C^2(\R^n)$.
Combining \eqref{p1} and \eqref{p2}, we have
\begin{equation}\aligned\label{Lphi}
\mathcal{L}\phi=&\f1{\sqrt{g}}\p_i(g^{ij}\sqrt{g}\phi_j)+e^{\f14x\cdot Du}\p_i(e^{-\f14x\cdot Du})g^{ij}\phi_j\\
=&g^{ij}\phi_{ij}+\f1{\sqrt{g}}\p_i(g^{ij}\sqrt{g})\phi_j-\f14(u_i+x_ku_{ki})g^{ij}\phi_j\\
=&g^{ij}\phi_{ij}-\f14(x_ku_{ki}-u_i)g^{ij}\phi_j-\f14(u_i+x_ku_{ki})g^{ij}\phi_j\\
=&g^{ij}\phi_{ij}-\f12x_ku_{ki}g^{ij}\phi_j\\
=&g^{ij}\phi_{ij}-\f12x_j\phi_j.
\endaligned
\end{equation}

\begin{remark} The submanifold $M$ in $\ir{2n}_n$ is defined by $(\ir{n}, ds^2=u_{ij}dx_idx_j)$.
The operator $\mathcal{L}$  is also defined on $M$. It is precisely
the drift Laplacian $\mathcal{L}$ in the version of space-like
self-shrinkers in pseudo-Euclidean space, which  was introduced by
Colding-Minicozzi \cite{CM1} in the ambient Euclidean space.
\end{remark}

\begin{lemma}\label{Conv}
Let $\Om$ be a convex domain in $\R^n(n\ge2)$ and $u$ be a smooth proper convex function in $\Om$,
then for any $\a>0$
$$\int_{\Om}|x\cdot Du|e^{-\a u}dx<+\infty.$$
\end{lemma}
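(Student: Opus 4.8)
The plan is to reduce the weighted integrability of $|x\cdot Du|$ to the finiteness of the Gaussian-type weight $\int_\Omega e^{-\alpha u}\,dx$ together with a control on how fast $u$ grows, exploiting convexity and properness. The key structural fact about a proper convex function is that $u$ attains its minimum at some interior point $x_0\in\Omega$ (properness forces the sublevel sets $\{u\le c\}$ to be compact, hence the infimum is achieved), and after translating we may assume $x_0=0$ and, subtracting a constant, $u(0)=0$, so $u\ge 0$ and $Du(0)=0$. Then by convexity the function $t\mapsto u(tx)$ is convex on $[0,1]$ with derivative $x\cdot Du(tx)$, which is nondecreasing in $t$; evaluating at $t=1$ gives the pointwise bound $0\le x\cdot Du(x)\le \int_0^1 \frac{d}{dt}\big(u(tx)\big)\,\frac{dt}{t}\cdot(\text{something})$ — more cleanly, convexity yields $u(x)-u(0)\ge x\cdot Du(0)=0$ and also, comparing $u$ along the segment, $x\cdot Du(x)\ge u(x)-u(0)=u(x)$ is false in general, so instead I would use the correct inequality $x\cdot Du(x)\ge u(x)$ only after noting convexity gives $u(0)\ge u(x)+(0-x)\cdot Du(x)$, i.e. $x\cdot Du(x)\ge u(x)-u(0)=u(x)\ge 0$. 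Thus $0\le x\cdot Du(x)$ is automatic and, crucially, it is bounded below by $u(x)$; the subtlety is the \emph{upper} bound.

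For the upper bound I would compare $u$ at the point $2x$ with the tangent plane at $x$: convexity gives $u(2x)\ge u(x)+x\cdot Du(x)$, hence $x\cdot Du(x)\le u(2x)-u(x)\le u(2x)$. Therefore
\begin{equation*}
\int_\Omega |x\cdot Du(x)|\,e^{-\alpha u(x)}\,dx \le \int_\Omega u(2x)\,e^{-\alpha u(x)}\,dx.
\end{equation*}
Now the task is to show the right-hand side is finite, and for this I would split $\Omega$ into the region where $u(x)\le R$ (which is compact by properness, so the integral there is trivially finite) and the complement. On $\{u>R\}$ I would like to absorb the polynomial-type growth of $u(2x)$ into a fraction of the exponential: since $u(2x)$ grows at most like a quadratic in the relevant regime is \emph{not} available a priori, so instead I exploit convexity once more — along the ray through $x$, convexity of $s\mapsto u(sx)$ together with $u(0)=0$ gives $u(2x)\le \frac{2(1-1/2)}{1-1/2}\cdots$; concretely, $u(x)=u(\tfrac12\cdot 2x)\le \tfrac12 u(2x)$, i.e. $u(2x)\le 2u(x)$ is false in general too (convexity gives the opposite direction). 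The honest route: convexity of $s\mapsto u(sx)$ on $[0,2]$ with value $0$ at $s=0$ gives that $\frac{u(sx)}{s}$ is nondecreasing, so $\frac{u(2x)}{2}\ge \frac{u(x)}{1}$, i.e. $u(2x)\ge 2u(x)$. This is the wrong direction for a bound, so I would instead bound $u(2x)$ crudely by the maximum of $u$ on the sublevel set $\{u\le 2u(x)\}$'s dilate, and use that for any $c>0$ there is a constant $C_c$ with $u(2x)\le C_c\, e^{c\,u(x)}$ — indeed this follows because on $\{R<u(x)\le t\}$ the dilated point $2x$ lies in a fixed compact set depending on $t$, so $u(2x)$ is controlled by a function of $t=u(x)$ that grows, and by properness/convexity this growth is at most exponential in $u(x)$; choosing $c=\alpha/2$ gives $u(2x)e^{-\alpha u(x)}\le C\,e^{-\alpha u(x)/2}$.

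This reduces everything to the single estimate
\begin{equation*}
\int_\Omega e^{-\beta u(x)}\,dx<\infty \qquad\text{for every }\beta>0,
\end{equation*}
which I would prove via the coarea/layer-cake formula: $\int_\Omega e^{-\beta u}\,dx=\beta\int_0^\infty e^{-\beta t}\,|\{u\le t\}|\,dt$, so it suffices to show $|\{u\le t\}|$ grows subexponentially in $t$. By convexity and $u(0)=0$, $Du(0)=0$, the sublevel set $\{u\le t\}$ is a convex body, and a standard convexity estimate (e.g. John's ellipsoid, or simply that a convex function vanishing with vanishing gradient at the origin satisfies $u(x)\ge c|x|$ outside a unit ball, with $c$ depending on $u$) gives $\{u\le t\}\subset B_{R_0+t/c}(0)$, so $|\{u\le t\}|\le \omega_n (R_0+t/c)^n$, polynomial in $t$ — far better than needed. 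The linear growth claim $u(x)\ge c|x|-C$ itself follows from convexity: on the unit sphere $u\ge m>0$ for some $m$ (since $u>0$ there and the sphere is compact), and convexity of $s\mapsto u(s\omega)$ with $u(0)=0$ forces $u(s\omega)\ge sm$ for $s\ge 1$.

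\medskip

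\noindent\textbf{Where the difficulty lies.} The genuinely delicate point is not the layer-cake estimate — that is routine once properness gives compact sublevel sets — but rather getting the \emph{pointwise} comparison $|x\cdot Du(x)|\lesssim (\text{function of }u(x))$ with the function growing slowly enough to be killed by $e^{-\alpha u}$. The clean inequality is $0\le x\cdot Du(x)\le u(2x)-u(x)$, so the real work is estimating $u(2x)$ in terms of $u(x)$ on the noncompact part of $\Omega$; convexity does \emph{not} give $u(2x)\le C\,u(x)$, so one must instead argue geometrically that the dilation $x\mapsto 2x$ moves the sublevel set $\{u=t\}$ into $\{u\le \psi(t)\}$ for some finite $\psi(t)$ (finite precisely because $u$ is proper, so $\{u\le t\}$ is bounded and its double is contained in a larger bounded sublevel set), and then check $\psi(t)$ is at most exponential. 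I expect the write-up to organize this as: (1) normalize the minimum to $0$ at the origin; (2) prove linear lower growth $u(x)\ge c|x|-C$ from convexity and compactness of spheres; (3) deduce $\{u\le t\}\subset B_{C'(1+t)}$, hence $u(2x)\le \sup_{B_{2C'(1+u(x))}}u=:\psi(u(x))$ with $\psi$ finite-valued and monotone; (4) observe $\psi(t)\le C''(1+t)^2$ or simply $\psi(t)=o(e^{\beta t})$ by the same convexity estimate applied on the larger ball; (5) combine with the layer-cake bound on $\int e^{-\alpha u/2}$ to conclude. The case $n=1$ is not special here; the hypothesis $n\ge 2$ is presumably only used later.
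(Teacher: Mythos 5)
There is a genuine gap in the key step of your proposal. You reduce the lemma to the pointwise estimate $0\le x\cdot Du(x)\le u(2x)$ and then to the claim that for every $c>0$ there is $C_c$ with $u(2x)\le C_c\,e^{c\,u(x)}$. That claim is false for general proper convex $u$, and you offer no real argument for it beyond ``by properness/convexity this growth is at most exponential.'' Convexity with $u(0)=0$, $Du(0)=0$ gives $u(2x)\ge 2u(x)$ (the chord inequality at the midpoint), i.e.\ only a \emph{lower} bound; it gives no upper bound on $u(2x)$ in terms of $u(x)$ whatsoever. Concretely, take $u(x)=f(|x|)$ with $f$ smooth, increasing, convex, $f(0)=f'(0)=0$, built so that on each interval $[r_k,2r_k]$ the slope jumps to a value $M_k$ large enough that $f(2r_k)=f(r_k)+M_k r_k\ge e^{k\,f(r_k)}$ (choosing $M_k\ge M_{k-1}$ keeps $f$ convex and $f(r_k)\to\infty$ keeps it proper). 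Then $u(2r_k\omega)/e^{c\,u(r_k\omega)}\to\infty$ for every fixed $c$, so no bound $u(2x)\le C_c e^{cu(x)}$ holds, yet the lemma's conclusion is still true for this $u$. Thus the route through ``control $u(2x)$ by an exponential in $u(x)$'' cannot work. There is also a secondary issue: the inequality $x\cdot Du(x)\le u(2x)-u(x)$ uses the value of $u$ at $2x$, but $\Om$ is an arbitrary convex domain and $2x$ need not lie in $\Om$, so this comparison is not even defined in general.

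The paper avoids both problems by a different, and sharper, observation: after reducing $|x\cdot Du|e^{-\alpha u}$ to $|Du|\,e^{-\beta u}$ (using the linear lower bound $u(x)+C\ge\de|x|$, which you also derive correctly, to absorb the extra factor $|x|$), it notices that on each coordinate line the integrand is an \emph{exact derivative}: $|u_n|e^{-\beta u}=\pm\tfrac1\beta\,\p_n e^{-\beta u}$ on either side of the minimizer $x_n^*$ of $u$ along that line. Fubini then collapses the $x_n$-integral to $\tfrac{2}{\beta}e^{-\beta u(x',x_n^*)}$, and the linear growth in $x'$ finishes the job. No comparison between $u$ at a point and $u$ at its dilate is needed, and properness of $u$ near $\p\Om$ is used exactly where it should be: to kill the boundary terms $e^{-\beta u}\to 0$. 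This is the idea your proposal is missing; without it the step from the layer-cake bound $\int e^{-\beta u}<\infty$ (which you do prove correctly) back to $\int|x\cdot Du|e^{-\alpha u}<\infty$ does not go through.
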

\begin{proof}
Let $\G_t=\{x\in\Om|\ u(x)=t\}$ and $\Om_t=\{x\in\Om|\ u(x)<t\}$ for
each $t\in\R$. By the convexity of $u$, we know that $\G_t\cap L$
contains two point at most, where $L$ is any line in $\R^n$. Since
$u$ is proper, then $\G_t$ is homotopic to $(n-1)$-sphere in $\R^n$,
which implies $\Om_t$ is a bounded domain enclosed by $\G_t$. Thus,
$\inf_{x\in\Om}u(x)>-\infty$ and
$\lim_{x\rightarrow\p\Om}u(x)=+\infty$. By translating $\Om$ in the
plane $\R^n$, we can assume $0\in\Om$ and $u(0)=\inf_{x\in\Om}u(x)$.
Moreover, by the convexity of $u$, there exist constants $C,\de>0$
such that for any $x\in\Om$
\begin{equation}\aligned\label{growth}
u(x)+C\ge \de|x|.
\endaligned
\end{equation}
It suffices to show
\begin{equation}\aligned\label{I1}
\int_{\Om}|Du|e^{-\be u}dx<+\infty
\endaligned
\end{equation}
holds for some $0<\be<\a$.

Set $x'=(x_1,\cdots,x_{n-1}).$ Let $$\Om'=\{x'\in\R^{n-1}|\ \exists
x_n\ s.t.\ (x',x_n)\in\Om\}.$$
For every fixed $x'\in\Om'$,
$u_{nn}(x',x_n)=\p_{x_n}\p_{x_n}u(x',x_n)$ is positive, and
$u_{n}(x',x_n)$ is monotonic increasing in $x_n$. Since
$\lim_{x\rightarrow\p\Om}u(x)=+\infty$, then there is $x_n^*$ such
that $(x',x_n^*)\in\Om$ and $u_{n}(x',x_n^*)=0$. Furthermore, we
have $x_n^1,x_n^2\in[-\infty,+\infty]$ satisfying $x_n^1<x_n^2$ and
$(x',x_n^i)\in\p\Om$ for $i=1,2$.

For each fixed $x'\in\Om'$, we have
\begin{equation}\aligned\label{I2}
\int_{(x',x_n)\in\Om}|u_n|e^{-\be u}dx_n=&-\int_{x_n^1}^{x_n^*}u_n(x',x_n)e^{-\be u(x',x_n)}dx_n+\int_{x_n^*}^{x_n^2}u_n(x',x_n)e^{-\be u(x',x_n)}dx_n\\
=&\f1{\be}\int_{x_n^1}^{x_n^*}de^{-\be u(x',x_n)}-\f1{\be}\int_{x_n^*}^{x_n^2}de^{-\be u(x',x_n)}\\
=&\f2{\be}e^{-\be u(x',x_n^*)}.
\endaligned
\end{equation}
Since $u(x',x_n^*)+C\ge\de\sqrt{|x'|^2+(x_n^*)^2}\ge\de|x'|$, then by \eqref{I2},
\begin{equation}\aligned\label{I3}
\int_{\Om}|u_n|e^{-\be u}dx=&\int_{x'\in\Om'}\int_{(x',x_n)\in\Om}|u_n|e^{-\be u}dx_ndx'=\int_{x'\in\Om'}\f2{\be}e^{-\be u(x',x_n^*)}dx'\\
\le&\int_{x'\in\Om'}\f2{\be}e^{\be C-\be\de|x'|}dx'<\infty.
\endaligned
\end{equation}
By the same way to $\{u_i\}$ for $i=1,\cdots,n-1$, we know \eqref{I1} holds. This shows the Lemma.
\end{proof}

\begin{theorem}\label{bnst}
If $\Om$ is a convex domain containing the origin in $\R^n(n\ge2)$ and $u(x)$ is a smooth proper convex solution to \eqref{lss} in $\Om$, then $\Om$ is $\R^n$ and $u(x)$ is the quadratic polynomial $u(0)+\f12\lan D^2u(0)x, x\ran$.
\end{theorem}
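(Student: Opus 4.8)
The plan is to apply the integral method to $\phi:=\log\det D^2u$, which by \eqref{lss} equals $\f12 x\cdot Du-u$. First I record that $\mathcal L\phi=0$: from \eqref{p1} we get $\phi_j=\f12 x_iu_{ij}-\f12 u_j$, hence $\phi_{ij}=\f12 x_ku_{kij}$ and $g^{ij}\phi_{ij}=\f12 x_k g^{ij}u_{kij}=\f12 x_k\partial_k(\log g)=\f12 x_k\phi_k$, so the last line of \eqref{Lphi} gives $\mathcal L\phi=g^{ij}\phi_{ij}-\f12 x_j\phi_j=0$. I also note that the weight attached to $\mathcal L$ collapses: since $\sqrt g=e^{\f14 x\cdot Du-\f12 u}$ by \eqref{lss}, one has $\sqrt g\,e^{-\f14 x\cdot Du}=e^{-u/2}$, so $\mathcal L$ is symmetric with respect to the measure $e^{-u/2}dx$, and $\mathcal L\phi=e^{u/2}\partial_i(g^{ij}e^{-u/2}\partial_j\phi)$.

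Next I exhaust $\Om$ by the bounded sublevel sets $\Om_t=\{x\in\Om:u(x)<t\}$ (bounded because $u$ is proper, as in Lemma \ref{Conv}) and integrate $0=\int_{\Om_t}\phi\,\mathcal L\phi\,e^{-u/2}dx$ by parts, which yields
\begin{equation*}
\int_{\Om_t}g^{ij}\phi_i\phi_j\,e^{-u/2}dx=e^{-t/2}\int_{\G_t}\phi\,g^{ij}\phi_j\,\frac{u_i}{|Du|}\,d\sigma ,
\end{equation*}
where $\G_t=\{u=t\}$. Using $\phi_j=\f12 x_iu_{ij}-\f12 u_j$ the flux simplifies to $g^{ij}\phi_ju_i=\f12\big(x\cdot Du-u^{ij}u_iu_j\big)$. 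The point is to prove that this boundary term tends to $0$ as $t\to+\infty$; equivalently one inserts a cut-off $\eta=\eta(u)$ that is $1$ on $\{u<t\}$ and $0$ on $\{u>2t\}$, and then the Cauchy--Schwarz inequality for the positive form $g^{ij}$ reduces the matter to a decay estimate of the type $t^{-2}\int_{\{t<u<2t\}}\phi^2\,u^{ij}u_iu_j\,e^{-u/2}dx\to0$. To obtain such decay one combines: the linear growth \eqref{growth}, $u+C\ge\de|x|$; Lemma \ref{Conv}, which gives $\int_\Om|x\cdot Du|e^{-\a u}dx<\infty$ for every $\a>0$ and hence $\int_\Om|\phi|e^{-\a u}dx<\infty$; and an a priori bound on the inverse Hessian obtained from $\det D^2u=e^{\phi}$ together with the structural identity $\mathcal L(u^2)=2u\big(n-\f12 x\cdot Du\big)+2u^{ij}u_iu_j$, whose integral against $e^{-u/2}$ gives $\int_\Om u^{ij}u_iu_j\,e^{-u/2}dx<\infty$. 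With these in hand the boundary terms vanish in the limit, so $\int_\Om g^{ij}\phi_i\phi_j\,e^{-u/2}dx=0$; since $(g^{ij})>0$ and $e^{-u/2}>0$, this forces $D\phi\equiv0$, i.e. $\phi\equiv$ const on $\Om$.

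It remains to unwind this. With $\phi$ constant, $\det D^2u$ is constant and \eqref{lss} becomes $\f12 x\cdot Du-u\equiv$ const; differentiating once gives $D^2u\cdot x=Du$, and differentiating again gives $\sum_i u_{ijl}x_i=0$ for all $j,l$. Thus each $u_{jl}$ is constant along every ray from $0$, and since $\Om$ is convex with $0\in\Om$ and $u$ is smooth at $0$, letting the ray parameter tend to $0$ yields $D^2u\equiv D^2u(0)$ on $\Om$. Hence $u$ coincides on $\Om$ with the quadratic $u(0)+Du(0)\cdot x+\f12\lan D^2u(0)x,x\ran$, and $D^2u\cdot x=Du$ forces $Du(0)=0$, so $u(x)=u(0)+\f12\lan D^2u(0)x,x\ran$ on $\Om$. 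Finally, if $\Om\neq\ir n$, take $p\in\partial\Om$ and $x_k\in\Om$ with $x_k\to p$; then $u(x_k)\to u(0)+\f12\lan D^2u(0)p,p\ran<\infty$, so a sublevel set of $u$ containing the $x_k$ fails to be compact in $\Om$, contradicting properness. Therefore $\Om=\ir n$.

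The main obstacle is the vanishing of the boundary (equivalently cut-off) terms in the second step: this is exactly the point where \cite{HW} and \cite{CCY} needed an extra decay assumption on $D^2u$, and the real technical content of the theorem is to squeeze enough decay out of convexity, \eqref{growth}, Lemma \ref{Conv} and the identities for $\mathcal L$ to control $\phi^2u^{ij}u_iu_j\,e^{-u/2}$ (or the flux through $\G_t$) without any such hypothesis. Everything before and after that step is a short computation or elementary point-set topology.
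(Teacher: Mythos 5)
Your skeleton matches the paper's: set $\phi=\log\det D^2u$, show $\mathcal L\phi=0$, note the weight collapses to $e^{-u/2}dx$, cut off along sublevel sets of $u$, and invoke Lemma \ref{Conv}. Your final unwinding ($\phi$ constant $\Rightarrow$ $u$ quadratic $\Rightarrow$ $\Om=\R^n$) is also correct. But there is a genuine gap precisely at the step you yourself flag as the technical heart of the theorem, and the route you sketch to close it does not close it.

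Testing the identity $\mathcal L\phi=0$ against $\phi\eta^2$ and applying Young's inequality leaves you with the error term $t^{-2}\int_{\{t<u<2t\}}\phi^2\,u^{ij}u_iu_j\,e^{-u/2}dx$. You then assert this tends to $0$ because $\int_\Om|\phi|e^{-\a u}dx<\infty$ and $\int_\Om u^{ij}u_iu_j\,e^{-u/2}dx<\infty$. Even granting both of these (the second one does follow by a cutoff argument applied to $\mathcal Lu=n-\tfrac12x\cdot Du$, though your own $\mathcal L(u^2)$ route itself needs a Stokes/cutoff step that you do not supply), they give no control whatsoever on the \emph{product} $\phi^2\,u^{ij}u_iu_j$: there is no pointwise relation between $\phi$ and $u^{ij}u_iu_j$, and H\"older in the available $L^1$ classes requires moments of order $4$ in $\phi$ and order $2$ in $u^{ij}u_iu_j$ that you have not established. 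So the sentence ``with these in hand the boundary terms vanish in the limit'' is a claim, not an argument.

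The paper avoids this difficulty by two moves that you do not use, and Cauchy--Schwarz actually destroys the second one. First, it tests against $F(\phi)\eta^2$ for a \emph{bounded} positive strictly increasing $F$ (here $F(s)=e^s$ for $s<0$, $F(s)=1+\arctan s$ for $s>0$), so that $0<F(\phi)\eta\le 1+\tfrac\pi2$ uniformly; the error term then carries a bounded factor $F(\phi)\eta$ rather than $\phi^2$, and the left side $\int g^{ij}\phi_i\phi_j F'(\phi)\eta^2 e^{-u/2}$ is still a nonnegative form because $F'>0$. Second, with the cutoff $\eta=t+1-u$ on $\Om_{t+1}\setminus\Om_t$ one has $\eta_i=-u_i$ there, and the flux $g^{ij}u_i\phi_j=\tfrac12\big(x\cdot Du-u^{ij}u_iu_j\big)$ (you compute this yourself) has a definite \emph{sign structure}: the potentially unbounded term $-u^{ij}u_iu_j$ is nonpositive, so once multiplied by the nonnegative factor $F(\phi)\eta e^{-u/2}$ it can simply be discarded. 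Young's inequality throws this sign away. After dropping that term the error is bounded by $(1+\tfrac\pi2)\int_{\Om_{t+1}\setminus\Om_t}|x\cdot Du|e^{-u/2}dx$, which tends to $0$ directly from Lemma \ref{Conv} with no further decay estimates. In short: you need the bounded test function $F(\phi)$ together with the sign of $-u^{ij}u_iu_j$, not Cauchy--Schwarz plus separate integrability statements.
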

\begin{proof}
Let $\phi=\log g$, then by \eqref{p1}, $\phi_{ij}=\f12x_ku_{ijk}$ and
\begin{equation}\aligned\label{xphi}
g^{ij}\phi_{ij}=\f12g^{ij}x_ku_{ijk}=\f12x_k\phi_k.
\endaligned
\end{equation}
\eqref{xphi} was found by Chau-Chen-Yuan \cite{CCY}. Combining
\eqref{Lphi} and \eqref{xphi}, we have
\begin{equation}\aligned\label{Llg}
\mathcal{L}\phi=g^{ij}\phi_{ij}-\f12x_j\phi_j=0.
\endaligned
\end{equation}

Let $F$ be a positive monotonic increasing $C^1$-function on $\R$,
and $\e$ be a nonnegative Lipschitz function in $\Om$ with compact
support, both to be defined later. Using \eqref{lss} and \eqref{Llg}
and integrating by parts, we have
\begin{equation}\aligned\label{f1}
0=&-\int_{\Om}F(\phi)\e^2\mathcal{L}\phi\ e^{-\f14x\cdot Du}\sqrt{g}dx\\
=&\int_{\Om}g^{ij}\p_i\left(F(\phi)\e^2\right)\phi_je^{-\f14x\cdot Du}\sqrt{g}dx\\
=&\int_{\Om}g^{ij}\phi_i\phi_jF'\e^2e^{-\f{u}2}dx+2\int_{\Om}F(\phi)\e g^{ij}\e_i\phi_je^{-\f{u}2}dx.
\endaligned
\end{equation}
Since $u$ is proper convex, then $\lim_{x\rightarrow\p\Om}u(x)=+\infty$ and we define the set $\Om_t=\{x\in\Om|\ u(x)<t\}$ as Lemma \ref{Conv}, which is an exhaustion of the domain $\Om$. Let
\begin{eqnarray*}
   \e(x)\triangleq \left\{\begin{array}{ccc}
     1     & \quad\ \ \ {\rm{if}} \ \ \  x\in\Om_t \\ [3mm]
     t+1-u(x)    & \quad\ \ \ {\rm{if}} \ \ \  x\in\Om_{t+1}\setminus\Om_{t}\\ [3mm]
     0  & \quad\quad\  {\rm{if}} \ \ \  x\in\Om\setminus\Om_{t+1},
     \end{array}\right.
\end{eqnarray*}
and
\begin{eqnarray*}
   F(s)\triangleq \left\{\begin{array}{ccc}
     e^s     & \quad\ \ \ {\rm{if}} \ \ \  s<0 \\ [3mm]
     1    & \quad\ \ \ {\rm{if}} \ \ \  s=0\\ [3mm]
     1+\arctan s  & \quad\quad\  {\rm{if}} \ \ \  s>0.
     \end{array}\right.
\end{eqnarray*}
By \eqref{p1} and \eqref{f1}, we have
\begin{equation}\aligned\label{f2}
\int_{\Om_t}g^{ij}\phi_i\phi_jF'e^{-\f{u}2}dx\le&\int_{\Om}g^{ij}\phi_i\phi_jF'\e^2e^{-\f{u}2}dx=-2\int_{\Om}F(\phi)\e g^{ij}\e_i\phi_je^{-\f{u}2}dx\\
=&2\int_{\Om_{t+1}\setminus\Om_{t}}F(\phi)\e g^{ij}u_i(\f12x_ku_{jk}-\f12u_j)e^{-\f{u}2}dx\\
\le&\int_{\Om_{t+1}\setminus\Om_{t}}F(\phi)\e x_iu_ie^{-\f{u}2}dx\\
\le&(1+\f{\pi}2)\int_{\Om_{t+1}\setminus\Om_{t}}|x_iu_i|e^{-\f{u}2}dx.
\endaligned
\end{equation}
By Lemma \ref{Conv}, let $t$ go to infinity in \eqref{f2}, we know
$D\phi=0$ and $\phi=\log g$ is a constant in $\Om$. By the equation
\eqref{lss} and $0\in\Om$, as shown in \cite{CCY}, we know $u(x)$ is the quadratic
polynomial $u(0)+\f12\lan D^2u(0)x, x\ran$. Since
$\lim_{x\rightarrow\p\Om}u(x)=+\infty$, then $\Om=\R^n.$
\end{proof}

As for $n=1$, the equation \eqref{lss} gives the equation
$$u''=e^{\f{1}{2}xu'-u}.$$ Since $(xu'-u)'=xu''$, we have $xu'-u\ge
-u(0)$ and
$$u''(x)\ge e^{-\f{u(0)+u(x)}{2}}.$$ If
$\lim_{x\rightarrow+\infty}u(x)=C_0\in[-\infty,+\infty)$, then
$u(x)\le\max\{u(0),C_0\}$ on $[0,+\infty).$ Then $$u''(x)\ge
e^{-\f{u(0)+\max\{u(0),C_0\}}2}.$$ This means that
$$\lim_{x\rightarrow+\infty}u'(x)=+\infty,$$ which contradicts with
$\lim_{x\rightarrow+\infty}u(x)<+\infty$. A similar argument
concludes that\\ $\lim_{x\rightarrow -\infty}u(x)=+\infty$. Thus,
$$\lim_{|x|\rightarrow\infty}u(x)=+\infty.$$ Combining \eqref{growth},
we have
$$\int_\R|xu'|e^{-\f{u}2}dx<\infty.$$
Following the argument of Theorem \ref{bnst}, we could prove Theorem \ref{main} for the case $n=1$.

For proving Theorem \ref{main} completely, it suffices to remove the proper
condition  of $u(x)$ in Theorem \ref{bnst} when $\Om=\R^n$.
Now we give two lemmas on convex functions which will be used in
Theorem \ref{lim} in the case $n\ge 2$. One is an algebraic property
for the Hessian of convex functions, the other is on the size of
Lebesgue measure of a set which arises from the equation
\eqref{lss}.

\begin{lemma}\label{alg}
Let $u$ be a smooth convex function in a domain of $\R^n$. If
$\xi_1,\cdots,\xi_n$ is an arbitrary  orthonormal basis of $\R^n$,
then
$$\det D^2u\le u_{\xi_1\xi_1}u_{\xi_2\xi_2}\cdots u_{\xi_n\xi_n},$$
where $u_{\xi_i\xi_j}= \mathrm{Hessian} (u) (\xi_i,\xi_j)$ in $\ir{n}$
for $1\le i,j\le n$.
\end{lemma}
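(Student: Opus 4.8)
The plan is to recognize Lemma~\ref{alg} as Hadamard's inequality for positive definite symmetric matrices, combined with the elementary fact that $\det D^2u$ is independent of the choice of orthonormal frame. First I would let $O$ be the orthogonal matrix with columns $\xi_1,\dots,\xi_n$; then the matrix of the Hessian of $u$ in the basis $\{\xi_i\}$ is $A=O^T(D^2u)O$, so $\det A=(\det O)^2\det D^2u=\det D^2u$, while the diagonal entries of $A$ are precisely $a_{ii}=u_{\xi_i\xi_i}$, which are positive since $u$ is convex. Hence the assertion is equivalent to the purely linear-algebraic statement $\det A\le\prod_{i=1}^n a_{ii}$ for an arbitrary positive definite symmetric matrix $A=(a_{ij})$.

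To prove this I would normalize the diagonal. Set $D=\mathrm{diag}(a_{11},\dots,a_{nn})$ and $B=D^{-1/2}AD^{-1/2}$. Then $B$ is symmetric and positive definite with every diagonal entry equal to $1$, so $\mathrm{tr}\,B=n$. Applying the AM--GM inequality to the positive eigenvalues $\mu_1,\dots,\mu_n$ of $B$ gives $\det B=\prod_i\mu_i\le\big(\tfrac1n\sum_i\mu_i\big)^n=\big(\tfrac1n\mathrm{tr}\,B\big)^n=1$. Since $\det B=\det A\big/\prod_i a_{ii}$, this is exactly $\det A\le\prod_i a_{ii}$, with equality if and only if $B=I$, i.e.\ if and only if $D^2u$ is already diagonal in the frame $\{\xi_i\}$.

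I do not expect any genuine obstacle here. If one prefers to avoid eigenvalues, the same conclusion follows by induction on $n$ from Fischer's inequality $\det A\le a_{nn}\det A'$, where $A'$ is the principal submatrix obtained by deleting the last row and column (this inequality is itself immediate from the positivity of the Schur complement of $a_{nn}$ in $A$). The only point that genuinely requires attention is that the orthonormality of $\{\xi_i\}$ cannot be dropped: it is exactly what makes $\det D^2u$ frame-invariant in the first step, and the inequality is false for a general non-orthonormal basis.
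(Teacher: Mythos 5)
Your proposal is correct, and it actually contains both a new proof and the paper's proof. Your primary argument---reduce to the linear-algebraic statement $\det A\le\prod_i a_{ii}$ for a positive definite symmetric $A$, normalize by $B=D^{-1/2}AD^{-1/2}$ so that $\operatorname{tr}B=n$, and apply AM--GM to the eigenvalues of $B$---is a clean proof of Hadamard's inequality for positive definite matrices, and it is genuinely different from what the paper does. The paper instead runs an induction on dimension via the Schur complement: it writes $D^2u$ in the frame $\{\xi_i\}$ as a $2\times2$ block matrix, diagonalizes by congruence to obtain $\det D^2u=u_{\xi_1\xi_1}\det\bigl(A-\alpha^T\alpha/u_{\xi_1\xi_1}\bigr)$ where $A$ is the lower-right $(n-1)\times(n-1)$ block and $\alpha$ the off-diagonal row, observes that the Schur complement is positive definite and dominated in the Loewner order by $A$ so its determinant is at most $\det A$, and iterates. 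That is exactly the Fischer-inequality route you sketch at the end as an alternative. Your AM--GM argument is arguably more conceptual and pinpoints the equality case ($D^2u$ diagonal in the frame), at the cost of invoking eigenvalues; the paper's Schur-complement induction is more hands-on and avoids spectral theory entirely. Both hinge on the same first step you flag as essential, namely that orthonormality of $\{\xi_i\}$ is what makes $\det D^2u$ frame-invariant.
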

\begin{proof}
By an orthogonal transformation, we have
\begin{equation}\aligned\label{4.1}
\det D^2u=\det u_{\xi_i\xi_j}.
\endaligned
\end{equation}
Let $\a$ be a $(n-1)$-dimensional vector defined by
$(u_{\xi_1\xi_2},u_{\xi_1\xi_3},\cdots,u_{\xi_1\xi_n})$ and $A$ be a
$(n-1)\times(n-1)$ matrix $(u_{\xi_i\xi_j})_{2\le i,j\le n}$. Since
\begin{equation}\nonumber
\left(\begin{array}{cc}
1&   0\\
-\f1{u_{\xi_1\xi_1}}\a^T&  I_{n-1}  \\
\end{array}\right)\left(\begin{array}{cc}
u_{\xi_1\xi_1}&  \a \\
\a^T&  A  \\
\end{array}\right)\left(\begin{array}{cc}
1&  -\f1{u_{\xi_1\xi_1}}\a \\
0&  I_{n-1}  \\
\end{array}\right)=\left(\begin{array}{cc}
u_{\xi_1\xi_1}&  0 \\
0&  A-\f{\a^T\a}{u_{\xi_1\xi_1}} \\
\end{array}\right),
\end{equation}
then $A-\f{\a^T\a}{u_{\xi_1\xi_1}}$ is a positive definite matrix and
\begin{equation}\aligned\label{4.2}
\det D^2u=\det
u_{\xi_i\xi_j}=u_{\xi_1\xi_1}\det\left(A-\f{\a^T\a}{u_{\xi_1\xi_1}}\right)\le
u_{\xi_1\xi_1}\det(A).
\endaligned
\end{equation}
By induction,
\begin{equation}\label{4.3}
\det D^2u\le u_{\xi_1\xi_1}u_{\xi_2\xi_2}\cdots u_{\xi_n\xi_n}.
\end{equation}
\end{proof}

\begin{lemma}\label{comp}
Let $B_\de$ be an open ball with radius $\de$ and centered at the origin in $\R^m$, $v$ be a smooth convex function in $\overline{B_\de}$ with $v\big|_{\p B_\de}\le C_1$, then there is a constant $C_2>0$ depending only on $m,\de,C_1$ such that the set
$$E=\{x\in B_\de|\ e^{\f{v(x)}2}\det D^2v>C^m_2\}$$
has the measure $|E|<\f12|B_\de|$.
\end{lemma}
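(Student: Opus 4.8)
The plan is to reduce the statement, via the substitution $\psi:=e^{v/(2m)}$, to a Chebyshev-type estimate for the Laplacian of a bounded convex function, after discarding a thin collar near $\partial B_\delta$.

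First I would note that, $v$ being continuous and convex on the compact convex set $\overline{B_\delta}$, it attains its maximum on the set of extreme points, that is on $\partial B_\delta$; hence $v\le C_1$ on all of $\overline{B_\delta}$, and so the smooth convex function $\psi=e^{v/(2m)}$ satisfies $0<\psi\le P:=e^{C_1/(2m)}$ there. A direct computation gives
$$\Delta\psi=\frac1{2m}\,e^{v/(2m)}\Big(\Delta v+\frac1{2m}|Dv|^2\Big)\ge\frac1{2m}\,e^{v/(2m)}\,\Delta v ,$$
while the arithmetic--geometric mean inequality applied to the eigenvalues of the positive definite matrix $D^2v$ gives $\det D^2v\le(\Delta v/m)^m$, i.e.\ $\Delta v\ge m\,(\det D^2v)^{1/m}$. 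Combining these and using $e^{v/(2m)}=(e^{v/2})^{1/m}$, one gets $\Delta\psi\ge\frac12\big(e^{v/2}\det D^2v\big)^{1/m}$; in particular $\Delta\psi>C_2/2$ on the set $E$. Since $\psi$ is convex, $\Delta\psi\ge0$ throughout $B_\delta$.

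Next I would fix $\delta':=(7/8)^{1/m}\delta\in(0,\delta)$, so that $|B_\delta\setminus B_{\delta'}|=\frac18|B_\delta|$ and $d:=\delta-\delta'>0$ depends only on $m$ and $\delta$. For $x\in\overline{B_{\delta'}}$ with $D\psi(x)\neq0$, the supporting inequality for the convex function $\psi$ evaluated at the point $z=x+d\,D\psi(x)/|D\psi(x)|\in\overline{B_\delta}$ reads $d\,|D\psi(x)|\le\psi(z)-\psi(x)\le P$, so $|D\psi|\le P/d$ on $\overline{B_{\delta'}}$ (trivially also where $D\psi$ vanishes). By the divergence theorem,
$$\int_{B_{\delta'}}\Delta\psi\,dx=\int_{\partial B_{\delta'}}\frac{\partial\psi}{\partial\nu}\,d\sigma\le\frac{P}{d}\,|\partial B_{\delta'}|=:K ,$$
a constant depending only on $m,\delta,C_1$. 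Since $\Delta\psi\ge0$, Markov's inequality gives $|E\cap B_{\delta'}|\le|\{x\in B_{\delta'}:\Delta\psi>C_2/2\}|\le\frac{2}{C_2}\int_{B_{\delta'}}\Delta\psi\,dx\le\frac{2K}{C_2}$, whence $|E|\le|E\cap B_{\delta'}|+|B_\delta\setminus B_{\delta'}|\le\frac{2K}{C_2}+\frac18|B_\delta|$. It then suffices to take $C_2$ large (depending only on $m,\delta,C_1$) so that $2K/C_2<\frac14|B_\delta|$, which yields $|E|<\frac38|B_\delta|<\frac12|B_\delta|$.

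The one point requiring thought — and the reason for introducing $\psi$ — is that the naive route fails: neither $\int_{B_\delta}\det D^2v$ nor $\int_{B_\delta}\Delta v$ is bounded in terms of $m,\delta,C_1$ alone, since $\min_{\overline{B_\delta}}v$ is uncontrolled (take $v=N(|x|^2-\delta^2)$ and let $N\to\infty$). The exponential weight $e^{v/2}$ is exactly what compensates, and after the substitution it does so transparently: $\psi$ is bounded, hence on any ball $\overline{B_{\delta'}}$ compactly contained in $B_\delta$ its gradient is controlled purely by convexity, so the boundary flux of $\Delta\psi$ over $\partial B_{\delta'}$ is controlled; the collar $B_\delta\setminus B_{\delta'}$, where $|D\psi|$ may blow up, is then harmless once thrown away by the crude measure bound. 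The remaining work is just bookkeeping to keep every constant honestly dependent on $m,\delta,C_1$ only.
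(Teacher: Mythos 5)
Your proof is correct, and it takes a genuinely different — and considerably cleaner — route than the paper's. The paper argues by contradiction: it invokes its Lemma~\ref{alg} (the Hadamard-type bound $\det D^2v\le\prod_iD_{ii}v$) to cover $E$ by sets $E_i=\{D_{ii}v>C_2e^{-v/(2m)}\}$, extracts an $E_i$ of measure $\ge\f1{2m}|B_\de|$, selects a line segment meeting $E_i$ in measure $\ge C_3\de$, and then runs a delicate one-dimensional ODE/bootstrap argument (the claim \eqref{claim}) on the restriction $f$ to push $f$ above $C_1$ at the endpoint, contradicting $v|_{\p B_\de}\le C_1$. You instead replace Lemma~\ref{alg} by the weaker but sufficient AM--GM inequality $\det D^2v\le(\De v/m)^m$, and then the key observation that $\psi:=e^{v/(2m)}$ is itself convex, bounded by $e^{C_1/(2m)}$ on all of $\overline{B_\de}$, and satisfies $\De\psi\ge\f12\bigl(e^{v/2}\det D^2v\bigr)^{1/m}>C_2/2$ on $E$. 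A convex function bounded above has gradient controlled on any compactly contained subball purely by the distance to the boundary, so the divergence theorem bounds $\int_{B_{\de'}}\De\psi$ by a constant $K=K(m,\de,C_1)$, and Markov's inequality then kills $|E\cap B_{\de'}|$; the collar $B_\de\setminus B_{\de'}$ is discarded by choosing $|B_\de\setminus B_{\de'}|=\f18|B_\de|$. Every step in your argument checks out (including that $\psi$ is convex as a convex increasing function of a convex function, and that all constants trace back only to $m,\de,C_1$), and it quantifies exactly the role of the weight $e^{v/2}$ that the paper's proof exploits only implicitly through the factor $e^{-f(s)/(2m)}$ in its ODE estimate. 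What you lose by using AM--GM rather than the Hadamard inequality is nothing here; what you gain is a global $m$-dimensional integral argument in place of a slicing and iteration scheme, which is both shorter and makes the dependence of $C_2$ on the data transparent.
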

\begin{proof}
Suppose that the measure $|E|\ge\f12|B_\de|$ for some sufficiently
large $C_2$, and we will deduce the contradiction. Denote the open
sets
$$E_i=\{x\in B_\de\big|\ D_{ii}v(x)>C_2e^{-\f{v(x)}{2m}}\}$$ for
$i=1,\cdots,m$. By Lemma \ref{alg}, $$D_{11}vD_{22}v\cdots
D_{mm}v\ge\det D^2v,$$ then $$E\subset\bigcup_{1\le i\le m}E_i.$$ Thus,
$$\f12|B_\de|\le|E|\le|\bigcup_{1\le i\le m}E_i|\le\sum_{i=1}^m|E_i|,$$
which implies there is a $E_i$ with
$$|E_i|\ge\f1{2m} |B_\de|.$$ Without loss of generalarity
set $E_1=E_i$, then there is $$L=\{x=(x_1,\cdots,x_m)\in B_\de\big|\
x_2=y_2,\cdots,x_m=y_m\}$$ such that the measure of $L\cap E_1$ is
no less than $C_3\de$ for some constant $0<C_3<1$ depending only on
$m$.

Let $f(s)=v(s,y_2,y_3,\cdots,y_m)$, $I=\{s\in\R\big|\
(s,y_2,y_3,\cdots,y_m)\in L\}$, then $I=(-s_0,s_0)$ with
$\f{C_3\de}2\le s_0\le\de$ and $$E_1=\{|s|<s_0|\
f''(s)>C_2e^{-\f{f(s)}{2m}}\}.$$ Without loss of generality, we
select $0\le a_1<b_1<a_2<b_2<\cdots<a_N<b_N$ for some $N<\infty$
such that $L\cap
E_1\supset\bigcup_{i=1}^N(a_i,b_i)\times(y_2,\cdots,y_m)$ and
$\sum_{i=1}^N(b_i-a_i)\ge\f{C_3\de}3$.

For deducing the contradiction, we need prove $f(s_0)$ is sufficiently large as $C_2$ is sufficiently large, which violates our assumption $v\big|_{\p B_\de}\le C_1$.
Since $v$ is convex, then $\sup_{x\in \overline{B_\de}}v(x)\le C_1$ and $f''=D_{11}v>0$. By Newton-Leibnitz formula, we get
$$f(0)-C_1\le f(0)-f(-s_0)=\int_{-s_0}^0f'(s)ds\le f'(0)s_0,$$
which implies
\begin{equation}\aligned\label{f'(0)}
f'(0)\ge\f{f(0)-C_1}{s_0}.
\endaligned
\end{equation}
Let $C_4=\f1{s_0}(C_1e^{\f{C_1}{2m}}-f(0)e^{\f{f(0)}{2m}})$, which depends only on $m,\de$ and $C_1$.
Since $C_2$ is sufficiently large, then there is a $c\in(a_j,b_j)$ such that
\begin{equation}\aligned\label{cj}
\sum_{i=1}^{j-1}(b_i-a_i)+c-a_j\in\left(\f{C_4}{C_2},\f{2C_4}{C_2}\right).
\endaligned
\end{equation}
If $f'(c)<0$, then $f(0)\ge f(s)$ for $s\in[0,c]$. Combining \eqref{f'(0)}, \eqref{cj} and the definition of $E_1$ and $C_4$, we have
\begin{equation}\aligned
f'(c)=&f'(0)+\int_0^cf''(s)ds\ge f'(0)+\sum_{i=1}^{j-1}\int^{b_i}_{a_i}f''(s)ds+\int^{c}_{a_j}f''(s)ds\\
\ge&f'(0)+\sum_{i=1}^{j-1}\int^{b_i}_{a_i}C_2e^{-\f{f(s)}{2m}}ds+\int^{c}_{a_j}C_2e^{-\f{f(s)}{2m}}ds\\
\ge&f'(0)+\sum_{i=1}^{j-1}\int^{b_i}_{a_i}C_2e^{-\f{f(0)}{2m}}ds+\int^{c}_{a_j}C_2e^{-\f{f(0)}{2m}}ds\\
\ge&\f{f(0)-C_1}{s_0}+C_4e^{-\f{f(0)}{2m}}=e^{-\f{f(0)}{2m}}\left(C_4+\f1{s_0}\big(f(0)e^{\f{f(0)}{2m}}-C_1e^{\f{f(0)}{2m}}\big)\right)\\
\ge&e^{-\f{f(0)}{2m}}\left(C_4+\f1{s_0}\big(f(0)e^{\f{f(0)}{2m}}-C_1e^{\f{C_1}{2m}}\big)\right)=0.
\endaligned
\end{equation}
Thus, $f'(c)\ge0$. Together with $f''>0$, we have
\begin{equation}\aligned\label{f'}
0\le f'(s_1)\le f'(s_2)\ \mathrm{and}\ f(s_1)\le f(s_2)\quad \mathrm{for}\ c\le s_1\le s_2\le s_0.
\endaligned
\end{equation}
Denote $\de_j=b_j-c$ and $\de_k=b_k-a_k$ for $k=j+1,\cdots,N$. By \eqref{f'} and the definition of $E_1$, for $t\in(c,b_j)$ we obtain
$$f'(t)=f'(c)+\int_c^tf''(s)ds\ge C_2\int_c^te^{-\f{f(s)}{2m}}ds\ge C_2(t-c)e^{-\f{f(t)}{2m}},$$
then
\begin{equation}\aligned\nonumber
e^{\f{f(t)}{2m}}=e^{\f{f(c)}{2m}}+\int_c^t\f{f'(s)}{2m}e^{\f{f(s)}{2m}}ds\ge\int_c^t\f{C_2}{2m}(s-c)ds=\f{C_2}{4m}(t-c)^2.
\endaligned
\end{equation}
So we claim
\begin{equation}\aligned\label{claim}
f'(b_k)\ge C_2\sum_{i=j}^k\de_ie^{-\f{f(b_k)}{2m}},\ \ \ \mathrm{and} \ \ \ e^{\f{f(b_k)}{2m}}\ge \f{C_2}{4m}\bigg(\sum_{i=j}^k\de_i\bigg)^2\quad \mathrm{for}\ k=j,\cdots,N.
\endaligned
\end{equation}
If \eqref{claim} holds for some $k<N$, then $f'(a_{k+1})\ge f'(b_k)$ and $f(a_{k+1})\ge f(b_k)$ by \eqref{f'}. For any $t\in(a_{k+1},b_{k+1})$, we get
\begin{equation}\aligned
f'(t)=&f'(a_{k+1})+\int_{a_{k+1}}^tf''(s)ds\ge C_2\sum_{i=j}^k\de_ie^{-\f{f(b_k)}{2m}}+C_2\int_{a_{k+1}}^te^{-\f{f(s)}{2m}}ds\\
\ge& C_2\bigg(t-a_{k+1}+\sum_{i=j}^k\de_i\bigg)e^{-\f{f(t)}{2m}},
\endaligned
\end{equation}
and
\begin{equation}\aligned
e^{\f{f(t)}{2m}}=&e^{\f{f(a_{k+1})}{2m}}+\int_{a_{k+1}}^t\f{f'(s)}{2m}e^{\f{f(s)}{2m}}ds\\
\ge& \f{C_2}{4m}\bigg(\sum_{i=j}^k\de_i\bigg)^2+\int_{a_{k+1}}^t\f{C_2}{2m}\bigg(s-a_{k+1}+\sum_{i=j}^k\de_i\bigg)ds\\
=&\f{C_2}{4m}\bigg(t-a_{k+1}+\sum_{i=j}^k\de_i\bigg)^2.
\endaligned
\end{equation}
By induction, we complete this claim. Combining the selection of $a_i,b_i$ and \eqref{cj}\eqref{f'}\eqref{claim}, we conclude
\begin{equation}\aligned\nonumber
C_1\ge f(s_0)\ge f(b_N)\ge2m\log\f{C_2}{4m}+4m\log\sum_{i=j}^N\de_i\ge2m\log\f{C_2}{4m}+4m\log(\f{C_3\de}3-\f{2C_4}{C_2}),
\endaligned
\end{equation}
which is impossible for sufficiently large $C_2$.
\end{proof}

%Let $\th=(\th_1,\cdots,\th_n)\in\S^{n-1}(1)$, we define the first derivative of $u$ in $\th$ direction
%$$u_{\th}(x)=\f{\p}{\p r}u(x)=\f{x}{|x|}\cdot Du(x),$$
%and the second radial derivative of $u$ be
%$$u''(x)=\f{\p^2}{\p r^2}u(x)=\sum_j\f{x_j}{|x|}\p_j(\sum_i\f{x_i}{|x|}u_i)=\sum_{i,j}u_{ij}\f{x_ix_j}{|x|^2}.$$

\begin{theorem}\label{lim}
Any entire smooth convex solution $u$ to \eqref{lss} in $\ir{n}$ is
proper.
\end{theorem}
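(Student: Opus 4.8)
I would prove the theorem by contradiction; the case $n=1$ has just been disposed of, so let $n\ge 2$ and suppose $u$ is an entire smooth convex solution of \eqref{lss} which is not proper. Two preliminary facts. First, convexity gives $u(0)\ge u(x)-x\cdot Du(x)$, i.e. $x\cdot Du(x)\ge u(x)-u(0)$, so \eqref{lss} yields the universal bound
\begin{equation}\label{unibd}
e^{u(x)/2}\det D^2u(x)=\exp\!\Big(\tfrac12x\cdot Du(x)-\tfrac12u(x)\Big)\ \ge\ e^{-u(0)/2}=:\kappa>0\qquad(x\in\R^n).
\end{equation}
Second, since $u$ is convex and not proper, some sublevel set $\{u<C_1\}$ is a nonempty unbounded open convex set; its recession cone contains a unit vector $e$, hence for a suitable base point $p$ and radius $\de>0$ it contains the half-cylinder $\mathcal C=\{p+te+\xi:\ t\ge0,\ \xi\perp e,\ |\xi|<\de\}$. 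Thus $u\le C_1$ on $\mathcal C$, and along each ray $\{p+\xi+te:\ t\ge0\}$ the convex function $u$ is bounded above on a half-line, hence non-increasing, so $\partial_eu\le0$ on $\mathcal C$. I now derive a contradiction according to whether or not $u$ is bounded below on $\mathcal C$.

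Suppose first $u\ge C_0$ on $\mathcal C$. For $z$ in the coaxial half-cylinder $\mathcal C'=\{p+te+\xi:\ t\ge\de/2,\ |\xi|<\de/2\}$ one has $\overline{B(z,\de/2)}\subset\mathcal C$, so the monotonicity of the difference quotients of the convex function $u$ gives $Du(z)\cdot v\le\frac2\de(u(z+\tfrac\de2 v)-u(z))\le\frac2\de(C_1-C_0)$ for every unit vector $v$, whence $|Du|\le\frac2\de(C_1-C_0)$ on $\mathcal C'$. Since $D^2u>0$, the map $Du$ is injective with open image, so the change of variables $y=Du(x)$ gives $\int_{\mathcal C'}\det D^2u\,dx=|Du(\mathcal C')|<\infty$. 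On the other hand, by \eqref{lss} and $x\cdot Du\ge u-u(0)$ we have $\det D^2u\ge e^{-u(0)/2}e^{-u/2}\ge e^{-(u(0)+C_1)/2}>0$ throughout $\mathcal C'$, and $\mathcal C'$ has infinite Lebesgue measure; this contradicts the finiteness of the integral.

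In the remaining case $u$ is unbounded below on $\mathcal C$. Then there are $z_k\in\mathcal C$ with $u(z_k)=-M_k\to-\infty$; a limit point of $\{z_k\}$ would carry the value $-\infty$, so $|z_k|\to\infty$, and since the transverse part of $z_k$ stays bounded its $e$-component $t_k\to\infty$. Using convexity alone — pulling $z_k$ toward the axis of $\mathcal C$ if necessary, then stepping in the direction $e$ and comparing, via midpoint convexity, $\hat z_k:=z_k+c\de\,e$ with points of $\mathcal C$ where $u\le C_1$ — one produces balls $\tilde B_k\subset\mathcal C$ of a \emph{fixed} radius $\rho$ on which $u\le-N_k'$ with $N_k'\to+\infty$. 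Applying Lemma~\ref{comp} (translated to the origin) with $m=n$, radius $\rho$ and upper bound $-N_k'$, one gets a constant $C_2=C_2(n,\rho,N_k')$ with $\big|\{x\in\tilde B_k:\ e^{u/2}\det D^2u>C_2^{\,n}\}\big|<\tfrac12|\tilde B_k|$. Inspecting the proof of Lemma~\ref{comp}, the threshold on $C_2$ is governed by the requirement that an explicit lower bound for $\sup_{\tilde B_k}u$ exceed $-N_k'$, and the auxiliary quantities entering it remain bounded (in fact tend to $0$) as the upper bound $\to-\infty$; hence $C_2$ can be taken $\to0$, so $C_2^{\,n}<\kappa$ for $k$ large. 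But then \eqref{unibd} forces $e^{u/2}\det D^2u>C_2^{\,n}$ on \emph{all} of $\tilde B_k$, contradicting the measure estimate. Both cases being impossible, $u$ is proper. The second case is the heart of the matter: its delicate points are (i) manufacturing, from convexity alone, balls of a fixed radius inside $\mathcal C$ on which $u$ is arbitrarily negative — careful bookkeeping of the cylinder is needed here — and (ii) tracking the dependence of the constant in Lemma~\ref{comp} on its upper bound so that it falls below $\kappa=e^{-u(0)/2}$; it is through the proof of Lemma~\ref{comp} that Lemma~\ref{alg} is used. The bounded-below case, by contrast, is a soft volume comparison for the Monge–Ampère measure $\det D^2u\,dx$.
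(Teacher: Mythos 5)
Your proof is correct in outline (the two steps you flag as ``delicate'' are indeed the ones that need fleshing out, but they both go through), and it takes a genuinely different route from the paper's, so a comparison is worthwhile. The paper does \emph{not} make your bounded-below/unbounded-below dichotomy. It instead studies the radial limits $\kappa_\beta=\lim_{r\to\infty}u(r\beta)/r$, shows that $\Lambda=\{\beta:\kappa_\beta\le 0\}$ is nonempty (otherwise $u$ grows uniformly linearly, contradicting $\liminf u<\infty$), extracts a minimizing direction $\theta$ with $\kappa=\kappa_\theta\le 0$, and builds a thin half-cylinder $\sr C_\theta$ inside $U=\{u<\kappa\lan\theta,\cdot\ran+u(0)\}$. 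The crucial estimate is then $\int_{\sr C_\theta}u_{\theta\theta}<\infty$ (obtained by integrating $u_{\theta\theta}\ge 0$ along rays and using $u_\theta\to\kappa$), which yields a sequence of cross-sectional slices on which $u_{\theta\theta}^{-1}>r_i$ on at least half the slice; since by Lemma~\ref{alg} the $(n-1)$-dimensional restricted Hessian satisfies $\det D^2_{\sr S}u\ge u_{\theta\theta}^{-1}\det D^2u$, the quantity $e^{u/2}\det D^2_{\sr S}u$ exceeds $r_ie^{-u(0)/2}\to\infty$ there, and Lemma~\ref{comp} is applied in dimension $m=n-1$ with the \emph{fixed} upper bound $C_1=u(0)$; the growth of $r_i$ supplies the contradiction. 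You, by contrast, apply Lemma~\ref{comp} in full dimension $m=n$, keep the left-hand side at the universal constant $\kappa=e^{-u(0)/2}$, and instead push the upper bound $C_1=-N_k'\to-\infty$, using that the threshold $C_2(m,\de,C_1)$ can be taken $\to 0$. That last claim, which you justify by ``inspecting the proof,'' is cleanest to see by scaling: if $C_2(m,\de,0)=\bar C_2$ works when $v|_{\p B_\de}\le 0$, then replacing $v$ by $v-C_1$ shows $C_2(m,\de,C_1)=\bar C_2 e^{C_1/(2m)}$ works, which indeed tends to $0$. Your Case~1 (bounded below) is a pleasant shortcut the paper does not need: the finiteness of the Monge--Amp\`ere mass $\int_{\mathcal C'}\det D^2u\,dx=|Du(\mathcal C')|$ against the pointwise lower bound $\det D^2u\ge e^{-(u(0)+C_1)/2}$ on a set of infinite measure is a soft contradiction, with no use of Lemma~\ref{alg} or \ref{comp}. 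And the ``ball-manufacturing'' in your Case~2 is routine convexity, as you say: first replace $z_k$ by its midpoint with $p+t_ke$ to get $|\xi_k|<\de/2$ at the cost of halving $-M_k+C_1$, then for $q\in B(z_k',\de/8)$ extend the segment from $z_k'$ through $q$ by an equal length inside $\mathcal C$ to bound $u(q)\le\tfrac12 u(z_k')+\tfrac12 C_1$. Net assessment: the paper's argument is sharper in that it never needs to track the $C_1$-dependence of Lemma~\ref{comp}, while yours is more elementary in structure (no directional infimum, no sliced Hessian, and half the work is absorbed by a soft Monge--Amp\`ere volume count).
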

\begin{proof}
To prove the result, it suffices to show $\lim_{|x|\rightarrow\infty}u(x)=+\infty$ for $n\ge2$. %, or else we have complete the proof
Let $B_r^n$ be an open ball in $\R^n$ with radius $r$
and centered at the origin. Suppose that
\begin{equation}\aligned\label{bdu}
\liminf_{|x|\rightarrow\infty}u(x)<+\infty.
\endaligned
\end{equation}
Since $\f{\p}{\p r}\bigg(r\lan\be,Du(r\be)\ran-u(r\be)\bigg)=ru_{ij}\be_i\be_j>0$ for every $\be=(\be_1,\cdots,\be_n)\in\S^{n-1}(1)$, then
$$r\p_ru(r\be)-u(r\be)\ge-u(0),$$
and
$$\left(\f{u(r\be)-u(0)}r\right)'=\f{r\p_ru(r\be)-u(r\be)+u(0)}{r^2}\ge0.$$
So $\lim_{r\rightarrow\infty}\f{u(r\be)}r$ always exists (may be infinity)
and is denoted by $\k_\be$. Let $\La=\{\be\in\S^{n-1}(1)|\ \k_\be\le0\}$.
If $\La=\emptyset$, then for any $\be\in\S^{n-1}(1)$, there is a $r_\be>0$
such that $u(r_\be \be)-u(0)\ge\f12\tilde{\k}_\be r_\be$, where $\tilde{\k}_\be=\min\{\k_\be,1\}>0$.
By the continuity of $u$, there is an open domain $S_\be\subset\S^{n-1}(1)$ containing $\be$ such that
$u(r_\be \g)-u(0)\ge\f14\tilde{\k}_\be r_\be$ for each $\g\in S_\be$. Since $u$ is convex,
then $\p_ru(r\g)\ge\f14\tilde{\k}_\be$ for $r\ge r_\be$, which implies
$$u(r\g)-u(0)=\int_{r_\be}^r\p_su(s\g)ds+u(r_\be \g)-u(0)
\ge\f14\tilde{\k}_\be (r-r_\be)+\f14\tilde{\k}_\be
r_\be=\f14\tilde{\k}_\be r$$ for each $\g\in S_\be$ and $r\ge
r_\be$. By the finite cover property, there is a sequence
$\{\tilde{\be}_i\}_{i=1}^N$ such that
$\S^{n-1}(1)\subset\bigcup_{1\le i\le N} S_{\tilde{\be}_i}.$ Let
$r^*=\max_{1\le i\le N}r_{\tilde{\be}_i}$ and $\k^*=\min_{1\le i\le
N}\tilde{\k}_{\tilde{\be}_i}>0$, then for any $\be\in\S^{n-1}(1)$
and $r\ge r^*$, we have $u(r\be)-u(0)\ge\f14\k^*r.$ This contradicts
with \eqref{bdu}. Therefore, $\La$ is nonempty.

There is a sequence $\{\bar{\be}_i\}\subset\La$ such that
$$\lim_{i\rightarrow\infty}\k_{\bar{\be}_i}=\inf_{\be\in\La}\k_{\be}.$$
And we can assume $\lim_{i\rightarrow\infty}\bar{\be}_i=\th$ for some $\th=(\th_1,\cdots,\th_n)\in\S^{n-1}(1)$. For every fixed $r>0$, there is a $i_0>0$ such that for all $i\ge i_0$, $u(r\bar{\be}_i)\ge u(r\th)-1$. Then
$$0\ge\k_{\bar{\be}_i}\ge\f{u(r\bar{\be}_i)-u(0)}r\ge\f{u(r\th)-u(0)-1}r.$$
Hence $u(r\th)\le u(0)+1$, and
$$\lim_{i\rightarrow\infty}\k_{\bar{\be}_i}\ge\lim_{r\rightarrow\infty}\f{u(r\th)-u(0)-1}r=\k_\th.$$
Therefore $\k_\th=\inf_{\be\in\La}\k_{\be}\le0$. Let $\k=\k_\th$ for simplicity. For each $\be\in\S^{n-1}(1)$, we obtain
\begin{equation}\aligned\label{kappa}
\k=\lim_{r\rightarrow\infty}\f{u(r\th)}r=\lim_{r\rightarrow\infty}\lan\th,Du(r\th)\ran
\le\lim_{r\rightarrow\infty}\f{u(r\be)}r.
\endaligned
\end{equation}
Let
$$U=\{x\in\R^n\big|\ u(x)<\k\lan\th,x\ran+u(0)\}.$$
Since $u$ is an entire convex function in $\R^n$, then $U$ is a
convex domain in $\R^n$. The definition of $\k$ implies $r\th\in U$
for any $r>0$. We then can find a slim column region around the ray
$r\th$ inside the convex domain $U$. Precisely,  there exist
$r_0,\de>0$ such that
$$\sr{C}_\th\triangleq\{r\th+\a\in\R^n\big|\ r\ge r_0,\ \a\bot\th\ \mathrm{and}\ |\a|<\de\}\subset U.$$
Let $$\sr{S}_r=\{r\th+\a\big|\ \a\bot\th\ \mathrm{and}\ |\a|<\de\}$$
be a slice of $\sr{C}_\th.$ Let $u_{\th}(r\th+\a)=\f{\p}{\p
r}u(r\th+\a)=\lan\th, Du(r\th+\a)\ran$ denote the $\th-$directional
derivative of $u$ and $$u_{\th\th}(r\th+\a)=\f{\p^2}{\p
r^2}u(r\th+\a)=\sum_{i,j}u_{ij}(r\th+\a)\th_i\th_j.$$ By
$\sr{C}_\th\subset U$ and \eqref{kappa}, we conclude
$\lim_{r\rightarrow\infty}u_\th(r\th+\a)=\k$ for any
$\a\bot\th,|\a|<\de$. We don't have the pointwise estimate for
$u_{\th\th}$ in $\sr{C}_\th$, but have the following integral
estimate
\begin{equation}\aligned\label{dense}
\int_r^\infty\int_{\sr{S}_s}u_{\th\th}dV_{\sr{S}_s}ds=&\int_r^\infty\int_{\a\bot\th,|\a|<\de} u_{\th\th}(s\th+\a)dV_{\a}ds\\
=&\int_{\a\bot\th,|\a|<\de}\int_r^\infty u_{\th\th}(s\th+\a)dsdV_{\a}\\
=&\int_{\a\bot\th,|\a|<\de}\big(\k-u_\th(r\th+\a)\big)dV_{\a}<\infty.\\
\endaligned
\end{equation}
Let $\omega_{n-1}$ be the standard volume of $(n-1)$-dimensional
unit balls. From \eqref{dense}, we can find a sequence
$\{r_i\}_{i=1}^\infty\subset\R$ with
$\lim_{i\rightarrow\infty}r_i=+\infty$ such that the open set
$$\widetilde{\sr{S}}_{r_i}\triangleq\{x\in \sr{S}_{r_i}\big|\ u_{\th\th}(x)<\f1{r_i}\}$$
has measure
$$|\widetilde{\sr{S}}_{r_i}|\ge\f12|\sr{S}_{r_i}|=\f12\omega_{n-1}\de^{n-1}.$$
Here, the factor $\f12$ is not essential and could be replaced by
any positive constant which is less than 1.

Since $\f{\p}{\p r}\bigg(r\lan\be,Du(r\be)\ran-u(r\be)\bigg)=ru_{ij}\be_i\be_j>0$ for every $\be\in\S^{n-1}(1)$, then
$$x\cdot Du(x)-u(x)\ge-u(0),$$
and
\begin{equation}\aligned\label{est}
\det D^2u(x)=e^{\f12x\cdot Du(x)-u(x)}\ge e^{-\f{u(0)+u(x)}2}.
\endaligned
\end{equation}

Let $D^2_{\sr{S}}u$ be the $(n-1)$-Hessian matrix of $u$ in
$\sr{S}_r$ for each $r\ge r_0$, then $D^2_{\sr{S}}u>0$. By
\eqref{4.2} and \eqref{est}, we get
$$\det D^2_\sr{S}u\ge u_{\th\th}^{-1}\det D^2u\ge u_{\th\th}^{-1}e^{-\f{u(0)+u(x)}2}.$$
The definition of $U$ implies that $u(x)\le u(0)$ for any $x\in\sr{S}_{r_i}\subset U$.
Combining the measure $|\widetilde{\sr{S}}_{r_i}|=|\{x\in
\sr{S}_{r_i}\big|\ u_{\th\th}^{-1}(x)>r_i\}|\ge\f12|\sr{S}_{r_i}|$
and Lemma \ref{comp}, we arrive at a contradiction if $i$ goes to
infinity. Therefore, $\lim_{|x|\rightarrow\infty}u(x)=+\infty$ when
$n\ge2$. We complete the proof.
\end{proof}

\noindent {\it Proof of Theorem \ref{main}.} Noting the case $n=1$ and Combining Theorem \ref{bnst} and Theorem \ref{lim},
we finish the proof. \qed

Let $a>0$, $c$ be constant numbers and $b\in\R^n$ be a constant vector. The entire solution to the following general type equation
$$\log\det D^2u(x)=a\big(\f12x\cdot Du(x)-u(x)\big)+b\cdot x+c$$ is a quadratic polynomial.
In fact, let $w(x)=au(x)-2b\cdot x-c-n\log a$, then $w$ satisfies the equation \eqref{lss}.

\section{Application to other equations}

Let's prove Theorem \ref{re} by the integral method, which is
similar to the previous section.

\noindent {\it Proof of Theorem \ref{re}.} Let $M$ be a Lagrangian submanifold satisfying \eqref{lls} in $\R^{2n}$ with induced metric $g_{ij}dx_idx_j$.
Then $g_{ij}=\de_{ij}+\sum_ku_{ik}u_{jk}$, and denote $g=\det{g_{ij}}$ for short. Then
\begin{equation}\aligned\label{e1}
\p_i(g^{ij}\sqrt{g})=&\f12\sqrt{g}g^{kl}\p_ig_{kl}g^{ij}-\sqrt{g}g^{ki}\p_ig_{kl}g^{lj}\\
=&\f12\sqrt{g}g^{kl}g^{ij}(u_{ks}u_{lsi}+u_{ksi}u_{ls})-\sqrt{g}g^{ki}g^{lj}(u_{ks}u_{lsi}+u_{ksi}u_{ls})\\
=&-\sqrt{g}g^{kl}g^{ij}u_{kls}u_{is}.
\endaligned
\end{equation}
Define the differential operator $\mathcal{L}$ on $C^2(\R^n)$ by
$$\mathcal{L}\phi=\f1{\sqrt{g}}e^{\f{|x|^2+|Du|^2}4}\f{\p}{\p x_i}\left(g^{ij}\sqrt{g}e^{-\f{|x|^2+|Du|^2}4}\f{\p}{\p x_j}\phi\right),$$
which is the same as the drift Laplacian in \cite{CM1}.

Let $\Theta=\arctan\la_1(x)+\cdots+\arctan\la_n(x)$, which is the phase function on Lagrangian submanifold $M\in\R^{2n}$.
By \cite{CCY},
\begin{equation}\aligned\label{e2}
\Theta_k=g^{ij}u_{ijk}=-\f12u_k+\f12x\cdot Du_k
\endaligned
\end{equation}
and we have
\begin{equation}\aligned\label{e3}
\mathcal{L}\Theta=&g^{ij}\Theta_{ij}+\f1{\sqrt{g}}\p_i(g^{ij}\sqrt{g})\Theta_j-\f12g^{ij}(x_i+u_ku_{ki})\Theta_j\\
=&g^{ij}\Theta_{ij}-g^{kl}g^{ij}u_{kls}u_{is}\Theta_j-\f12g^{ij}(x_k\de_{ki}+u_ku_{ki})\Theta_j\\
=&g^{ij}\Theta_{ij}+g^{ij}\big(\f12u_s-\f12x_ku_{ks}\big)u_{is}\Theta_j-\f12g^{ij}\big(x_k(g_{ki}-u_{ks}u_{is})+u_ku_{ki}\big)\Theta_j\\
=&g^{ij}\Theta_{ij}-\f12g^{ij}x_kg_{ki}\Theta_j=g^{ij}\Theta_{ij}-\f12x_j\Theta_j.\\
\endaligned
\end{equation}
By \eqref{e2}, $\Theta_{kl}=\f12x_su_{skl}$. Then $g^{kl}\Theta_{kl}=g^{kl}\f12x_su_{skl}=\f12x_j\Theta_j$ (see also \cite{CCY}), which implies
\begin{equation}\aligned\label{e4}
\mathcal{L}\Theta=0.
\endaligned
\end{equation}
Let $\n$ and $d\mu$ be Levi-Civita connection and volume element of $M$ with respect to the metric
$g_{ij}dx_idx_j$, and $\r=e^{-\f{|x|^2+|Du|^2}4}$. If $\e$ is a smooth function in $M$ with compact support,
then by integral by parts we have
\begin{equation}\aligned\label{e5}
0=&-\int_M\e^2\Theta\mathcal{L}\Theta \r d\mu=2\int_M\e\Theta\n\e\cdot\n\Theta \r d\mu+\int_M|\n\Theta|^2\e^2\r d\mu\\
\ge&-2\int_M|\n\e|^2\Theta^2\r d\mu-\f12\int_M|\n\Theta|^2\e^2\r d\mu+\int_M|\n\Theta|^2\e^2\r d\mu,
\endaligned
\end{equation}
which implies
\begin{equation}\aligned\label{e6}
\int_M|\n\Theta|^2\e^2\r d\mu\le4\int_M|\n\e|^2\Theta^2\r d\mu.
\endaligned
\end{equation}
Since $\Theta$ is a bounded function and $M$ has Euclidean volume
growth \cite{DX}, then we obtain $\Theta$ is a constant. Then, as
shown in \cite{CCY}, we obtain Theorem \ref{re}. \qed

Now, let's consider another equation.
If $v$ is a smooth subharmonic function on $\R^n$ satisfying
\begin{equation}\aligned\label{2.1}
\log\De v=\f12x\cdot Dv-v.
\endaligned
\end{equation}
Let $\phi=\log\De v$, then $\phi_i=-\f12v_i+\f12x_jv_{ij}$ and $\phi_{ii}=\f12x_jv_{iij}$. We have
\begin{equation}\aligned\label{2.2}
\De\phi=\f12x_j\p_j(\De v)=\f12e^\phi x\cdot D\phi.
\endaligned
\end{equation}
\begin{theorem}
Let $\phi(x)$ be an entire smooth solution to \eqref{2.2} in $\R^n$ and $\e$ be a Lipschitz function in $\R^n$ with compact support
and $\e\big|_{B_r}\equiv1$. If
$$\lim_{r\rightarrow+\infty}\int_{\R^n\setminus B_r}\f{|D\e|^2}{|x|^2}e^{-\phi}e^{-\f{|x|^2}4e^\phi}=0,$$
%$e^{\phi(x)}\ge\f{C+4(n-2)(\log|x|)^2}{|x|^2\log|x|}$ for any constant $C$ and sufficient large $|x|$,
then $\phi$ is a constant.
\end{theorem}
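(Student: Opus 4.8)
The plan is to exhibit $\phi$ as a supersolution of a weighted Laplace equation whose weight is precisely the Gaussian-type factor occurring in the hypothesis, and then to run the integral (cut-off) method of the previous sections. Put $\Psi:=\f{|x|^{2}}{4}e^{\phi}$, so that $D\Psi=\f12e^{\phi}x+\f{|x|^{2}}{4}e^{\phi}D\phi$ and hence $D\Psi\cdot D\phi=\f12e^{\phi}x\cdot D\phi+\f{|x|^{2}}{4}e^{\phi}|D\phi|^{2}$. Combining this with \eqref{2.2}, which reads $\De\phi=\f12e^{\phi}x\cdot D\phi$, one gets the key identity
\[
\operatorname{div}\!\big(e^{-\Psi}D\phi\big)=e^{-\Psi}\big(\De\phi-D\Psi\cdot D\phi\big)=-\,\f{|x|^{2}}{4}e^{\phi}\,|D\phi|^{2}\,e^{-\Psi}\ \le\ 0,
\]
so $\phi$ is a supersolution of $\operatorname{div}(e^{-\Psi}Dw)=0$, with the right-hand side strictly negative exactly where $D\phi\neq0$. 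This plays the role here that the identities $\mathcal L\phi=0$ played in Theorems \ref{bnst} and \ref{re}.

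Next, for each $r>0$ take the cut-off $\e=\e_{r}$ of the statement (Lipschitz, compact support, $\e\equiv1$ on $B_{r}$), multiply the identity above by $\e^{2}\ge0$ and integrate over $\R^{n}$. Since $\e$ has compact support, integration by parts gives
\[
\int_{\R^{n}}\f{|x|^{2}}{4}e^{\phi}|D\phi|^{2}\e^{2}e^{-\Psi}\,dx=-\int_{\R^{n}}\e^{2}\operatorname{div}\!\big(e^{-\Psi}D\phi\big)dx=2\int_{\R^{n}}\e\,e^{-\Psi}\,D\e\cdot D\phi\,dx.
\]
Estimating the last integrand by the weighted Cauchy--Schwarz inequality $2\e\,D\e\cdot D\phi\le\f12\e^{2}\,\f{|x|^{2}}{4}e^{\phi}|D\phi|^{2}+2\big(\f{|x|^{2}}{4}e^{\phi}\big)^{-1}|D\e|^{2}$, absorbing the first term into the left side, and using $\big(\f{|x|^{2}}{4}e^{\phi}\big)^{-1}=\f{4e^{-\phi}}{|x|^{2}}$, one arrives at
\[
\int_{\R^{n}}\f{|x|^{2}}{4}e^{\phi}|D\phi|^{2}\e^{2}e^{-\Psi}\,dx\ \le\ 16\int_{\R^{n}}\f{|D\e|^{2}}{|x|^{2}}e^{-\phi}e^{-\Psi}\,dx.
\]
Because $\e\equiv1$ on $B_{r}$ we have $D\e\equiv0$ there, so the right side equals $16\int_{\R^{n}\setminus B_{r}}\f{|D\e|^{2}}{|x|^{2}}e^{-\phi}e^{-\Psi}\,dx$, while the left side is at least $\int_{B_{r}}\f{|x|^{2}}{4}e^{\phi}|D\phi|^{2}e^{-\Psi}\,dx$.

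Now let $r\to\infty$. By the hypothesis the right side tends to $0$, so $\int_{B_{r}}\f{|x|^{2}}{4}e^{\phi}|D\phi|^{2}e^{-\Psi}\,dx=0$ for every $r$, whence $\int_{\R^{n}}\f{|x|^{2}}{4}e^{\phi}|D\phi|^{2}e^{-\Psi}\,dx=0$. The integrand is continuous and nonnegative, so it vanishes identically; this forces $D\phi\equiv0$ on $\R^{n}\setminus\{0\}$ and, by continuity, on all of $\R^{n}$. Hence $\phi$ is a constant. The only step that is not a routine computation is guessing the weight $e^{-\Psi}$ with $\Psi=\f{|x|^{2}}{4}e^{\phi}$: it is singled out by the requirement that the drift $D\Psi$ in $\operatorname{div}(e^{-\Psi}D\phi)=e^{-\Psi}(\De\phi-D\Psi\cdot D\phi)$ cancel $\De\phi$ once \eqref{2.2} is inserted, leaving only the good nonpositive term $-\f{|x|^{2}}{4}e^{\phi}|D\phi|^{2}e^{-\Psi}$; after that the argument is the familiar one, and the sole hypothesis that can fail is the integrability condition, which is exactly what is assumed.
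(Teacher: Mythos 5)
Your proof is correct and takes essentially the same approach as the paper: the divergence identity $\operatorname{div}(e^{-\Psi}D\phi)=-\f{|x|^2}{4}e^\phi|D\phi|^2e^{-\Psi}$ with $\Psi=\f{|x|^2}{4}e^\phi$ is precisely the computation the authors perform implicitly when they multiply \eqref{2.2} by $\e^2e^{-\f{|x|^2}{4}e^\phi}$ and integrate by parts, and the weighted Cauchy--Schwarz and cut-off steps yield the same bound with constant $16$.
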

\begin{proof}
Let $\e$ be a Lipschitz function on $\R^n$ with compact support and $\e\mid_{B_r}\equiv1$, then we multiply $\e^2e^{-\f{|x|^2}4e^\phi}$ on both sides of \eqref{2.2} and integral by parts,
\begin{equation}\aligned\label{2.3}
\int_{\R^n}\f12x\cdot D\phi &e^\phi\e^2e^{-\f{|x|^2}4e^\phi}=-\int_{\R^n}D\phi\cdot D(\e^2e^{-\f{|x|^2}4e^\phi})\\
=&\int_{\R^n}D\phi\cdot(\f12x+\f{|x|^2}4D\phi)e^\phi\e^2e^{-\f{|x|^2}4e^\phi}-2\int_{\R^n}\e D\e\cdot D\phi e^{-\f{|x|^2}4e^\phi}.
\endaligned
\end{equation}
Hence we have
\begin{equation}\aligned\label{2.4}
\f14\int_{\R^n}|x|^2|D\phi|^2&e^\phi\e^2e^{-\f{|x|^2}4e^\phi}=2\int_{\R^n}\e D\e\cdot D\phi e^{-\f{|x|^2}4e^\phi}\\
\le&\f14\int_{\R^n\setminus B_r}|x|^2|D\phi|^2e^\phi\e^2e^{-\f{|x|^2}4e^\phi}+4\int_{\R^n\setminus B_r}\f{|D\e|^2}{|x|^2}e^{-\phi}e^{-\f{|x|^2}4e^\phi},
\endaligned
\end{equation}
then
\begin{equation}\aligned\label{2.5}
\int_{B_r}|x|^2|D\phi|^2e^\phi e^{-\f{|x|^2}4e^\phi}\le16\int_{\R^n\setminus B_r}\f{|D\e|^2}{|x|^2}e^{-\phi}e^{-\f{|x|^2}4e^\phi}.
\endaligned
\end{equation}
Let $r\rightarrow\infty$, then \eqref{2.5} implies $\phi$ is a constant.
\end{proof}

For the case $n\ge3$, let
\begin{eqnarray*}
   \e(x)\triangleq \left\{\begin{array}{ccc}
     1     & \quad\ \ \ {\rm{if}} \ \ \  x\in B_r \\ [3mm]
     2-\f{|x|}{r}    & \quad\ \ \ {\rm{if}} \ \ \  x\in B_{2r}\setminus B_r\\ [3mm]
     0  & \quad\quad\  {\rm{if}} \ \ \  x\in\R^n\setminus B_{2r}.
     \end{array}\right.
\end{eqnarray*}
If $e^{\phi(x)}\ge4(n-2)\f{\log|x|}{|x|^2}$ for $|x|\ge r$, then
\begin{equation}\aligned\label{2.7}
\int_{\R^n\setminus B_r}\f{|D\e|^2}{|x|^2}e^{-\phi}e^{-\f{|x|^2}4e^\phi}\le\f1{4(n-2)}\int_{B_{2r}\setminus B_r}\f1{r^2|x|^{n-2}\log|x|}\le\f{C_n}{\log r}.
\endaligned
\end{equation}
Here, $C_n$ is a positive constant depending only on $n$.
%Let $r\rightarrow\infty$, then \eqref{2.5} and \eqref{2.7} implies $\phi$ is a constant.

For the case $n=2$, let
\begin{eqnarray*}
   \e(x)\triangleq \left\{\begin{array}{ccc}
     1     & \quad\ \ \ {\rm{if}} \ \ \  x\in B_r \\ [3mm]
     2-\f{\log\log|x|}{\log\log r}    & \quad\ \ \ {\rm{if}} \ \ \  x\in B_{r^{\log r}}\setminus B_r\\ [3mm]
     0  & \quad\quad\  {\rm{if}} \ \ \  x\in\R^n\setminus B_{r^{\log r}}.
     \end{array}\right.
\end{eqnarray*}
If $|x|^2\log|x|e^\phi\ge C>0$ for $|x|\ge r\ge e$, then
\begin{equation}\aligned\label{2.6}
\int_{\R^2\setminus B_r}\f{|D\e|^2}{|x|^2}e^{-\phi}e^{-\f{|x|^2}4e^\phi}\le\f1{C}\int_{B_{r^{\log r}}\setminus B_r}\f{\log|x|}{|x|^2(\log|x|)^2(\log\log r)^2}=\f{2\pi}{C\log\log r}.
\endaligned
\end{equation}
Hence, $\phi$ is a constant. By \eqref{2.1}, as
shown in \cite{CCY}, $v(x)$ is
the quadratic polynomial $v(0)+\f12\lan D^2v(0)x, x\ran$. For $n=2$, up to an additive constant \eqref{2.1} is equivalent to
$$\log\det\partial\bar{\partial}v(x)=\f12x\cdot Dv(x)-v(x).$$
Thus, our condition $\Delta v\ge \f{C}{|x|^2\log|x|}$ for any $C>0$ as $|x|\rightarrow\infty$ is weaker than $\partial\bar{\partial}v(x)\ge\f{1+\de}{2|x|^2}I$ for any $\de>0$ as $|x|\rightarrow\infty$ in \cite{CCY}.

\begin{remark}
We don't know whether every entire smooth subharmonic solution to \eqref{2.1} is the quadratic polynomial
$v(0)+\f12\lan D^2v(0)x, x\ran$.
Here, we provide a function $\phi(x)=\log(2n-4)-2\log|x|$ for $n\ge3$ and
$x\in\R^n\setminus\{0\}$, which satisfies \eqref{2.2} in $\R^n\setminus\{0\}$.
\end{remark}

%%%%%%%%%%%%%%%%%%%%%%%%%%%%%%%%%%%%%%%%%%%%%%%%%%%%%%%%%%%%%%%%%%%%%%%%%%%%%%
\bibliographystyle{amsplain}

\end{document}